\newtheorem{thm}{Theorem}
\newtheorem{lem}{Lemma}
\newtheorem *{mainthm*}{Main Theorem}
\theoremstyle{definition}
\numberwithin{equation}{section}
\DeclareMathOperator*{\psum}{{\sum}^\ast}
\DeclareMathOperator*{\puni}{{\coprod}^\ast}
\newcommand{\ppmod}[1]{\ (\mathrm{mod}\ {#1})}
\renewcommand{\phi}{\varphi}
\renewcommand{\epsilon}{\varepsilon}
\begin{document}
\title[A remark on the sum of a prime and a square]
{A remark on the conditional estimate \\ for the sum of a prime and a square}

\author[Y. Suzuki]{Yuta Suzuki}
\address{Graduate School of Mathematics\\
Nagoya University\\
Chikusa-ku, Nagoya 464-8602, Japan.}
\email{m14021y@math.nagoya-u.ac.jp}

\date{}

\begin{abstract}
Hardy and Littlewood conjectured that every sufficiently large integer
is either a square or the sum of a prime and a square.
Let $E(x)$ be the number of positive integers up to $x\ge4$
which does not satisfy this condition.
We prove
\[E(x)\ll x^{1/2}(\log x)^A(\log\log x)^4\]
with $A=3/2$ under the Generalized Riemann Hypothesis.
This is a small improvement of the previous remarks of Mikawa (1993)
and Perelli-Zaccagnini (1995) which claims $A=4,3$ respectively.
\end{abstract}

\subjclass[2010]{Primary 11P32; Secondary 11P55.}

\keywords{Hardy-Littlewood conjecture; Circle Method.}

\maketitle

\section{Introduction}
In 1923, Hardy and Littlewood \cite[Conjecture H]{HL} conjectured that
every sufficiently large integer is either a square or the sum of a prime and a square.
For a given real positive number $x\ge4$,
let $E(x)$ be the number of positive integers up to $x$
which is neither a square nor the sum of a prime and a square.
In the present note, we consider the conditional estimate of $E(x)$
under the Generalized Riemann Hypothesis (GRH).
We always assume GRH below.

In 1985, Vinogradov \cite{Vinogradov} remarked
that his method can be used to prove
\[E(x)\ll x^{2/3+\varepsilon}\]
under GRH where $\varepsilon>0$ is an arbitrary positive constant
and the implicit constant depends only on $\varepsilon$.
However he did not publish any details of this calculations,
and from today's point of view,
his assertion is rather weak for the conditional estimates assuming GRH.
The first detailed proof was published by Mikawa \cite[Proposition]{Mikawa}
in 1993,
and he obtained
\footnote{In his paper, Mikawa only claimed that 
\[E(x)\ll x^{1/2}(\log x)^5,\]
but what he essentially proved is (\ref{Mikawa}).}
\begin{equation}
\label{Mikawa}
E(x)\ll x^{1/2}(\log x)^{4}(\log\log x)^4
\end{equation}
by the circle method.
If we assume GRH, the factor $x^{1/2}$ of this estimate
looks like the best one with our current technology.
However, as to the factor $(\log N)^4$,
we can hope some improvements beyond Mikawa's result.
Actually, in 1995, Perelli and Zaccagnini \cite[p.191]{PZ} asserted
that one can obtain
\begin{equation}
\label{PZ}
E(x)\ll x^{1/2}(\log x)^{3+\varepsilon}
\end{equation}
by refining Mikawa's calculations.
However Perelli and Zaccagnini did not give the detailed calculations
for this assertion.
\newpage
In this note, we improve these conditional estimates to the following:
\begin{thm}
\label{Main_Theorem}
Assume GRH. Then we have
\begin{equation}
\label{Main}
E(x)\ll x^{1/2}(\log x)^{3/2}(\log\log x)^4.
\end{equation}
\end{thm}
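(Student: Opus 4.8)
The plan is to attack this via the circle method, estimating the number of exceptional integers $n \le x$ with no representation $n = p + m^2$. Write down the generating functions
\[
S(\alpha) = \sum_{p \le x} (\log p)\, e(p\alpha), \qquad T(\alpha) = \sum_{m \le \sqrt{x}} e(m^2 \alpha),
\]
and consider the weighted representation count $r(n) = \sum_{p + m^2 = n} \log p$, whose Fourier coefficients are recovered from $\int_0^1 S(\alpha) T(\alpha) e(-n\alpha)\, d\alpha$. The strategy is the standard one: split $[0,1)$ into major arcs $\mathfrak{M}$ near rationals $a/q$ with small $q$ and minor arcs $\mathfrak{m}$. On the major arcs I would extract the expected main term, which for each admissible $n$ should be of size $\gg x^{1/2}$ times a singular-series factor $\mathfrak{S}(n)$ bounded below by $\gg (\log\log x)^{-1}$ or so; the key point is that GRH gives sharp control on $S(\alpha)$ through explicit-formula estimates for $\psi(y;q,a)$, keeping the major-arc error savings as clean as possible.

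The heart of the matter is the minor-arc (and major-arc error) contribution to the mean square, since $E(x)$ is controlled by a Bessel-type inequality: the number of $n$ for which $r(n)$ deviates from its main term by the full main term is bounded by
\[
E(x) \ll \frac{1}{(\text{main term})^2} \int_{\mathfrak{m}} |S(\alpha)|^2 |T(\alpha)|^2 \, d\alpha \;+\; (\text{major-arc error contribution}).
\]
The factor $x^{1/2}$ in the final bound is essentially fixed by the size of $T(\alpha)$ and is, as the introduction notes, the natural barrier under current technology; all the improvement over Mikawa and Perelli--Zaccagnini must come from shaving logarithmic factors in this mean-square estimate. So the real work is to bound $\int_0^1 |S(\alpha)|^2 |T(\alpha)|^2\, d\alpha$ — and its restriction to the minor arcs — as tightly as possible in the powers of $\log x$. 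Here GRH enters decisively: on the major arcs it sharpens $S(\alpha)$, while on the minor arcs one combines a pointwise bound for $T(\alpha)$ (via the behaviour of Gauss-type sums $\sum_{m} e(m^2 a/q)$, of size $\asymp (x/q)^{1/2} + q^{1/2}$) with a mean-square bound for $S(\alpha)$ coming again from the prime counting under GRH.

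I expect the main obstacle to be the precise bookkeeping of logarithmic factors, where the sought improvement from $A=3$ down to $A=3/2$ lives. Concretely, the difficulty is to avoid the wasteful estimates that previous treatments incurred: one must not bound $|S(\alpha)|$ pointwise and $\int |T(\alpha)|^2$ crudely, since that throws away too many logs. Instead I would aim to treat $\int |S T|^2$ by a careful interplay — perhaps splitting the $q$-range and on each piece using the sharpest available combination of the Gauss-sum bound for $T$, the GRH mean-square for $S$, and a dyadic decomposition of the minor arcs — so that the $(\log x)$ powers from the major-arc singular series lower bound, from the minor-arc integral, and from the Bessel-inequality denominator combine to give exactly $(\log x)^{3/2}$ rather than $(\log x)^3$. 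The likely source of the half-integer exponent is an averaging gain: a square-root cancellation in some sum over $q$ or over the minor-arc contributions, where an $L^2$ or Cauchy--Schwarz step replaces a trivial supremum bound, halving one of the logarithmic losses. Nailing down that gain cleanly, while keeping the singular series and the $(\log\log x)^4$ factor under control, is where I would concentrate the effort.
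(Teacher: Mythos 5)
Your proposal sets up the standard exceptional-set framework correctly (mean square of $r(n)$ minus its expected main term, Chebyshev-type bound for $E(x)$, singular series bounded below — though the correct lower bound is $\mathfrak{S}(n)\gg(\log\log n)^{-2}$, which is what produces the $(\log\log x)^4$), but it stops exactly where the theorem begins: the entire content of the result is the bound $\ll(N\log N)^{3/2}$ for the mean square, and your plan defers this to ``a careful interplay'' and an unspecified ``averaging gain.'' That is a genuine gap, not a proof. Moreover, the one concrete commitment you do make — working with the finite exponential sums $S(\alpha)=\sum_{p\le x}(\log p)e(p\alpha)$ and $T(\alpha)=\sum_{m\le\sqrt{x}}e(m^2\alpha)$ — is precisely the framework that the paper abandons, because it is what caps the earlier results at $(\log x)^4$ (Mikawa) or $(\log x)^{3+\epsilon}$ (Perelli--Zaccagnini): approximating the finite prime sum on major arcs via $\psi(y;q,a)$ under GRH already costs more logarithms than the target allows. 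The paper instead uses the weighted power series $\widetilde{S}(\alpha)=\sum_n\Lambda(n)e^{-n/N}e(n\alpha)$ and $\widetilde{W}(\alpha)=\sum_n e^{-n^2/N}e(n^2\alpha)$ of Languasco--Perelli, so that the explicit formula applies directly and yields the clean GRH mean-square estimate $\psum_{a\ppmod{q}}\int_{-\xi}^{\xi}|\widetilde{S}(a/q+\alpha)-\mu(q)\phi(q)^{-1}z^{-1}|^2d\alpha\ll qN\xi(\log N)^2$; and since $\widetilde{W}$ is essentially Jacobi's theta function, the exact transformation formula replaces Weyl-type bounds and gives the loss-free decay $|\widetilde{W}(a/q+\alpha)|^2\ll\min(N/q,\,1/(q|\alpha|))$ on major arcs and $|\widetilde{W}|^2\ll Q$ on minor arcs.

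You also misidentify where the half-integer exponent comes from. It is not a Cauchy--Schwarz or square-root cancellation over $q$; it is a parameter balance. With the ingredients above, the minor arcs cost $\ll Q\cdot N\log N$ (pointwise bound on $\widetilde{W}$ times Parseval for $\widetilde{S}$), while the major arcs cost $\ll PN(\log N)^2$ (the GRH $L^2$ lemma, summed over $q\le P$ with a dyadic subdivision in $\alpha$), and choosing $Q=(N\log N)^{1/2}$, $P=N/Q$ equates the two at $(N\log N)^{3/2}$ — the exponent $3/2$ is the geometric mean of the single log on the minor arcs and the two logs on the major arcs. A further ingredient you would have to supply, and which your sketch does not anticipate, is the extension of the major-arc integrals to the full interval when computing the Fourier coefficients of the approximant: Mikawa's pure dual-large-sieve treatment of this step is too lossy at this level of precision, and the paper must split the extended arcs into a near region (handled by Bessel's inequality, giving $\ll QN$) and a far region (handled by Cauchy--Schwarz plus the dual large sieve, giving $\ll PN(\log N)^2$). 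Without these three devices — power-series generating functions with the GRH $L^2$ lemma, Jacobi's transformation formula, and the two-part treatment of the arc extension — the bookkeeping you describe cannot get below $(\log x)^3$.
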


In order to prove this improvement,
we also follow Mikawa's argument as Perelli and Zaccagnini did.
There are mainly three points to refine his argument.

The first point is
the use of the technique of Languasco and Perelli \cite{LP}. 
According to them, we shall use power series as the generating functions
rather than the trigonometric polynomials which Mikawa used.
It enables us to use the explicit formula directly,
and to reduce the errors arising from
the approximation of the prime generating function.
By this method, we can obtain the result (\ref{PZ})
which Perelli and Zaccagnini asserted.
So it seems that they obtained (\ref{PZ}) in this way.

The second point is the use of the classical Jacobi's transformation formula.
Once we decide to follow the technique of Languasco and Perelli,
it is natural to use the power series
\[\widetilde{W}(\alpha)=\sum_{n=1}^\infty e^{-n^2/N}e(n^2\alpha)\]
as the generating function of squares, where $N$ is a positive integer,
$\alpha$ is a real number, and $e(\alpha):=\exp(2\pi i\alpha)$.
However this series is just a simple variant of Jaconi's $\vartheta$-function,
so we can use Jacobi's formula instead of
the Weyl estimate or the truncated Jacobi formula \cite[Theorem 4.1]{Vaughan}.
This enables us to save some more $\log$ powers.

The third point is a careful treatment on the extension of major arcs.
Mikawa estimated the errors arising from this extension
by using the ``dual'' large sieve.
However in our case, where we are asking about $\log$ powers,
his estimate is insufficient.
So we shall devide these extended arcs into two parts,
and we use the Bessel inequality besides the large sieve.

We also consider carefully
the decay of the generating function of squares on the major arcs.

\section{Notations}
Here we briefly summarize the notations which we use in this note.
Some exceptional notations are given at each occurrence.
\begin{eqnarray*}
&x\ge4&: \text{positive real numbers,}\\[-0.5mm]
&N\ge 2&: \text{positive integers,}\\[-0.5mm]
&a,q&: \text{positive integers satisfying $(a,q)=1$,}\\[-0.5mm]
&P,Q>0&: \text{positive real numbers satisfying $PQ=N$,}\\[-0.5mm]
&m,n&: \text{integers
(We impose $m,n\ge1$ when these are used as summation variables.),}\\[-0.5mm]
&p& : \text{prime numbers,}\\[-0.5mm]
&k& : \text{positive integers,}\\[-0.5mm]
&\phi(q)&: \text{the Euler totient function,}\\[-0.5mm]
&\Lambda(n)&: \text{the von Mangoldt function,}\\[-0.5mm]
&\mu(n)&: \text{the M\"obius function,}\\[-0.5mm]
&\tau(n)&: \text{the divisor function,
i.e. the number of divisors of a positive integer $n$,}\\[-0.5mm]
&\alpha&: \text{real numbers,}\\[-0.5mm]
&e(\alpha)&:=\exp(2\pi i\alpha),\\[-0.5mm]
&z&:=1/N-2\pi i\alpha\\[-0.5mm]
&(n/p)&: \text{the Legendre symbol.}
\end{eqnarray*}
\vspace{-1mm}
Moreover, the symbols
\[\psum_{a\ppmod{q}},\quad\puni_{a\ppmod{q}}\]
denotes the sum or the disjoint union over reduced residues $a$ modulo $q$
respectively.

\section{Approximation of the generating functions}
Following Languasco and Perelli,
we use the following power series for the generating functions of prime numbers
and squares:
\[\widetilde{S}(\alpha)=\sum_{n=1}^\infty\Lambda(n)e^{-n/N}e(n\alpha),\]
\[\widetilde{W}(\alpha)=\sum_{n=1}^\infty e^{-n^2/N}e(n^2\alpha).\]
In this section,
we summarize some results on the approximation for these generating functions.

For $\widetilde{S}(\alpha)$,
we use the following variant of the estimate of Languasco and Perelli.
For the proof, see \cite{LZ_many} and \cite{LP}.
Let us introduce a new argument $z=1/N-2\pi i\alpha$.
\begin{lem}
\label{LP}
Assume GRH. If $1\le q\le N$ and $|\xi|\le 1/2$, then we have
\[\psum_{a\ppmod{q}}\int_{-\xi}^\xi
\left|\widetilde{S}\left(\frac{a}{q}+\alpha\right)
-\frac{\mu(q)}{\phi(q)}\frac{1}{z}\right|^2\,d\alpha
\ll qN\xi(\log N)^2.\]
\end{lem}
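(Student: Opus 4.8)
The plan is to detect the rational point $a/q$ by decomposing $\widetilde S(a/q+\alpha)$ over residue classes modulo $q$, to read off the main term from the pole carried by the arithmetic progressions, and to reduce the whole quantity to a single mean-square estimate for a sum over the nontrivial zeros of a Dirichlet $L$-function. First I would use $e^{-n/N}e(n(a/q+\alpha))=e(na/q)e^{-nz}$ to write
\[
\widetilde S\Big(\frac aq+\alpha\Big)=\sum_{b\ppmod q}e\Big(\frac{ab}q\Big)\sum_{\substack{n\ge1\\ n\equiv b\,(q)}}\Lambda(n)e^{-nz}.
\]
The classes with $(b,q)>1$ receive contributions only from prime powers $p^k$ with $p\mid q$, whose total is $O((\log qN)^2)$ and is harmless. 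Writing $\psi_b(z):=\sum_{n\equiv b\,(q)}\Lambda(n)e^{-nz}$ for $(b,q)=1$ and recalling $\sum_{(b,q)=1}e(ab/q)=\mu(q)$ since $(a,q)=1$, the asserted main term is exactly $\sum_{(b,q)=1}e(ab/q)\cdot\frac1{\phi(q)z}=\frac{\mu(q)}{\phi(q)}\frac1z$, i.e. the contribution of the expected density $1/(\phi(q)z)$ of each $\psi_b(z)$.

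Next I would pass to the mean square over $a$. Extending the sum over reduced residues to all residues modulo $q$ and applying Parseval for the finite Fourier transform gives $\psum_{a\ppmod q}|\widetilde S(a/q+\alpha)-\mu(q)/(\phi(q)z)|^2\le q\sum_{(b,q)=1}|f_b(z)|^2$ up to the negligible $(b,q)>1$ error, where $f_b(z):=\psi_b(z)-1/(\phi(q)z)$. Expanding $\psi_b$ into Dirichlet characters, $\psi_b(z)=\frac1{\phi(q)}\sum_{\chi\ppmod q}\bar\chi(b)\psi(z,\chi)$ with $\psi(z,\chi):=\sum_n\Lambda(n)\chi(n)e^{-nz}$, and using $\sum_{(b,q)=1}\bar\chi(b)\chi'(b)=\phi(q)\delta_{\chi=\chi'}$, all character cross terms cancel and
\[
\sum_{(b,q)=1}\big|f_b(z)\big|^2=\frac1{\phi(q)}\sum_{\chi\ppmod q}\Big|\psi(z,\chi)-\frac{\delta_\chi}z\Big|^2,
\]
where $\delta_\chi=1$ for $\chi=\chi_0$ and $\delta_\chi=0$ otherwise. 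Integrating in $\alpha$, the lemma therefore reduces to the single bound
\[
\int_{-\xi}^\xi\Big|\psi(z,\chi)-\frac{\delta_\chi}z\Big|^2\,d\alpha\ll N\xi(\log qN)^2
\]
uniformly in $\chi$, since summing over the $\phi(q)$ characters and multiplying by $q/\phi(q)$ returns $qN\xi(\log N)^2$.

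To treat a single character I would invoke the explicit formula coming from $e^{-nz}=\frac1{2\pi i}\int_{(c)}\Gamma(s)(nz)^{-s}\,ds$ applied to $-L'/L(s,\chi)$. Shifting the contour to the left, the pole at $s=1$ (present only for $\chi=\chi_0$) produces $\delta_\chi/z$, the nontrivial zeros produce $-\sum_\rho\Gamma(\rho)z^{-\rho}$, and the trivial zeros, the term at $s=0$, and the Euler factors separating $\chi$ from its inducing primitive character contribute only lower-order terms which, once squared and integrated over $[-\xi,\xi]$, are absorbed into the target bound. Under GRH every $\rho=\tfrac12+i\gamma$, so the task becomes the mean-square estimate for $\sum_\rho\Gamma(\rho)z^{-\rho}$.

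The hard part is precisely this mean square, $\int_{-\xi}^\xi|\sum_\rho\Gamma(\rho)z^{-\rho}|^2\,d\alpha\ll N\xi(\log qN)^2$, where the geometry of $z=1/N-2\pi i\alpha$ is decisive. Writing $\rho=\tfrac12+i\gamma$, one has $|z^{-\rho}|=|z|^{-1/2}e^{\gamma\arg z}$, while Stirling gives $|\Gamma(\tfrac12+i\gamma)|\asymp e^{-\pi|\gamma|/2}$, so each zero is damped by $e^{-\gamma(\pi/2-|\arg z|)}$; since $\pi/2-|\arg z|\asymp(N|\alpha|)^{-1}$ once $|\alpha|\ge1/N$, only the zeros with $|\gamma|\lesssim N|\alpha|$ contribute effectively, and the GRH density bound $N(T,\chi)\ll T\log(q(T+2))$ controls their number. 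Expanding the square, the diagonal $\gamma=\gamma'$ supplies the main term of size $\asymp N\xi\log(qN)$, while the off-diagonal pairs are controlled by integrating by parts in $\alpha$ against the oscillatory factor $e^{i(\gamma-\gamma')\log|z|}$, which gains $|\gamma-\gamma'|^{-1}$; summing $\sum_{\gamma\ne\gamma'}|\gamma-\gamma'|^{-1}$ over the GRH-spaced zeros then furnishes the second logarithm. Balancing the Gamma-decay, the $\arg z$-weighting, the oscillation, and the density of zeros against one another is the delicate heart of the matter; once this estimate is secured, the reductions above deliver the lemma at once.
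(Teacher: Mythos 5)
Your reduction is essentially the one used in the literature, but note first that the paper itself contains no proof of Lemma \ref{LP}: it is quoted with the proof deferred to \cite{LZ_many} and \cite{LP}, so the comparison below is with those cited arguments. Your opening steps match them: splitting $\widetilde{S}(a/q+\alpha)$ into residue classes, discarding the classes with $(b,q)>1$ at a cost $O(\log q\log N)$, recognizing the main term via the Ramanujan sum $\sum_{(b,q)=1}e(ab/q)=\mu(q)$, enlarging the sum over reduced residues $a$ to all residues so that additive orthogonality applies, and then passing to Dirichlet characters to reduce the lemma to the single-character bound
\[
\int_{-\xi}^{\xi}\Big|\psi(z,\chi)-\frac{\delta_\chi}{z}\Big|^2\,d\alpha\ll N\xi(\log qN)^2 ,
\]
with the factor $q$ reappearing as $(q/\phi(q))\cdot\phi(q)$. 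All of this is correct, as is the treatment of imprimitive characters. (A small mislabel: the bound $N(T,\chi)\ll T\log(q(T+2))$ is the unconditional Riemann--von Mangoldt formula; GRH enters only through $\rho=\tfrac12+i\gamma$.)

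The genuine gap is in the one step you leave as a sketch, and the sketch as described cannot deliver the stated exponent $2$: it delivers $3$. After taking absolute values, your off-diagonal bound is $\sum_{\gamma\neq\gamma'}|a_\gamma||a_{\gamma'}|\min(1,|\gamma-\gamma'|^{-1})$ with $|a_\gamma|\approx e^{-c|\gamma|/(N|\alpha|)}$. For a fixed $\gamma$, the inner sum $\sum_{\gamma'}\min(1,|\gamma-\gamma'|^{-1})$ over the surviving zeros $|\gamma'|\ll N|\alpha|$ is of size $\log(qN)\cdot\log(N|\alpha|)$: one logarithm from the density of zeros in unit intervals and a \emph{second} from the harmonic sum over dyadic distances $1\le|\gamma-\gamma'|\ll N|\alpha|$. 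Multiplying by the $\ll N\xi\log(qN)$ admissible $\gamma$ gives $\ll N\xi(\log qN)^3$, not $N\xi(\log qN)^2$; and since every term is nonnegative at this point, the loss is intrinsic to the $|\gamma-\gamma'|^{-1}$ accounting, not an artifact of crude estimation. The cited proofs avoid exactly this: one decouples the pairs by $|a_\gamma a_{\gamma'}|\le\tfrac12(|a_\gamma|^2+|a_{\gamma'}|^2)$ against a positive Fej\'er-type kernel (equivalently, applies a Gallagher or Montgomery--Vaughan mean-value argument, in the spirit of Selberg and Saffari--Vaughan, on each dyadic block $|\alpha|\asymp\eta$), so that the pair-distance sum runs against a kernel decaying like $|\gamma-\gamma'|^{-2}$ and costs only $O(\log qN)$ per zero instead of $\log^2$; this yields $N\eta(\log qN)^2$ per block, and the blocks sum geometrically to $N\xi(\log qN)^2$. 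The distinction is not cosmetic for this paper: with $(\log N)^3$ in Lemma \ref{LP}, the choice $Q=(N\log N)^{1/2}$ in Theorem \ref{mean_thm} no longer balances the arcs, the best resulting bound is $N^{3/2}(\log N)^2$, and the main theorem degrades to $E(x)\ll x^{1/2}(\log x)^2(\log\log x)^4$ --- precisely the refinement over Mikawa and Perelli--Zaccagnini that the paper exists to prove.
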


For $\widetilde{W}(\alpha)$, we use Jacobi's transformation formula
in the following form:
\begin{lem}
\label{Jacobi}
For any real number $\alpha$ and any complex number $z$ with $\Re z>0$,
we have
\[\sum_{n=-\infty}^\infty \exp\left(-\pi(n+\alpha)^2z\right)
=\frac{1}{\sqrt{z}}\sum_{n=-\infty}^\infty
\exp\left(-\pi n^2/z-2\pi in\alpha\right),\]
where the branch of $\sqrt{z}$ is chosen as its value at $1$ equals $1$.
\end{lem}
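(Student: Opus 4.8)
The plan is to derive this identity from the Poisson summation formula together with the computation of a Gaussian Fourier transform, and then to accommodate the complex parameter $z$ by analytic continuation. First I would treat the case of a real parameter $z>0$, where the function $f(t)=\exp(-\pi(t+\alpha)^2 z)$ is a Schwartz function of $t$, so that the Poisson summation formula
\[\sum_{n=-\infty}^\infty f(n)=\sum_{m=-\infty}^\infty \widehat{f}(m)\]
applies with no convergence difficulty, where $\widehat{f}(m)=\int_{-\infty}^\infty f(t)e(-mt)\,dt$.

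Next I would compute the Fourier transform explicitly. After the substitution $u=t+\alpha$ the integral becomes $e(m\alpha)\int_{-\infty}^\infty\exp(-\pi u^2 z-2\pi i m u)\,du$, and completing the square in the exponent writes $-\pi u^2 z-2\pi i m u=-\pi z(u+im/z)^2-\pi m^2/z$. The remaining integral is the standard Gaussian $\int_{-\infty}^\infty\exp(-\pi z v^2)\,dv=z^{-1/2}$ (valid for $z>0$ with the positive root, after a routine contour shift justified by Gaussian decay). This yields $\widehat{f}(m)=z^{-1/2}e(m\alpha)\exp(-\pi m^2/z)$, and substituting into the Poisson formula and replacing $m$ by $-m$ in the summation produces exactly the claimed identity for real $z>0$.

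Finally, I would extend the identity to all $z$ with $\Re z>0$ by analytic continuation. For each fixed real $\alpha$, both sides are holomorphic functions of $z$ on the right half-plane: the Gaussian decay makes each series converge uniformly on compact subsets of $\{\Re z>0\}$, so each defines a holomorphic function there, while the principal branch of $\sqrt{z}$ is holomorphic on the same region and agrees with the positive root on the positive real axis. Since the two sides coincide on the ray $(0,\infty)$, which has a limit point in the region, the identity theorem forces them to agree throughout the half-plane.

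The step I expect to require the most care is the justification of holomorphy together with the choice of branch: one must verify that both series converge locally uniformly in $z$ so that they are genuinely holomorphic, and confirm that the analytic continuation of $z^{-1/2}$ from the positive reals is precisely the principal branch normalized by $\sqrt{1}=1$ that appears in the statement. By contrast, the Poisson summation and the Gaussian integral are entirely classical once the parameter $z$ is taken real.
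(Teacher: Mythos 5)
Your proof is correct: the Poisson summation computation for real $z>0$, the Gaussian Fourier transform via completing the square, and the extension to $\Re z>0$ by locally uniform convergence and the identity theorem are all sound, and you handle the branch of $\sqrt{z}$ properly. The paper itself gives no proof of this lemma---it simply cites Montgomery and Vaughan (Theorem 10.1)---and the argument you give is precisely the standard proof found in that reference, so there is nothing to contrast.
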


This lemma is classical. For the proof, see the textbook of Montgomery and Vaughan
\cite[Theorem 10.1]{MV}.

Before applying this formula of Jacobi to $\widetilde{W}(\alpha)$,
we need to estimate the generalized Gaussian sum
\[G(a,n;q)=\sum_{k=1}^q e\left(\frac{ak^2+nk}{q}\right).\]
We abbreviate $G(a,0;q)$ by $G(a,q)$.
Our following estimate for the generalized Gaussian sum is well-known,
but we include its proof for completeness.
\begin{lem}
\label{Gauss_sum}
If $(a,q)=1$, then we have
\[G(a,n;q)\ll q^{1/2}.\]
\end{lem}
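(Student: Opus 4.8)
The plan is to avoid evaluating $G(a,n;q)$ explicitly and instead to bound its modulus by computing $|G(a,n;q)|^2$ through the standard squaring-out trick. First I would open the square and write
\[
|G(a,n;q)|^2=\sum_{k=1}^q\sum_{l=1}^q
e\!\left(\frac{a(k^2-l^2)+n(k-l)}{q}\right).
\]
Setting $k\equiv l+h\ppmod{q}$ and letting both $h$ and $l$ range over a complete residue system modulo $q$, the phase becomes $a(2lh+h^2)+nh$, in which the variable $l$ now occurs only linearly. This is the key algebraic step: the substitution differences the quadratic in $k$ and converts the troublesome cross term into a pure linear exponential in $l$, while the linear term $nk$ leaves only the harmless factor $e(nh/q)$.

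Next I would separate the variables and evaluate the inner sum over $l$:
\[
|G(a,n;q)|^2=\sum_{h=1}^q e\!\left(\frac{ah^2+nh}{q}\right)
\sum_{l=1}^q e\!\left(\frac{2ahl}{q}\right).
\]
The inner sum is a complete linear exponential sum, equal to $q$ when $q\mid 2ah$ and to $0$ otherwise. Since $(a,q)=1$, the divisibility $q\mid 2ah$ is equivalent to $q\mid 2h$, so only those $h$ with $2h\equiv0\ppmod{q}$ survive.

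Finally I would count the surviving terms. When $q$ is odd, $q\mid 2h$ forces $h\equiv0\ppmod{q}$, so a single term remains and $|G(a,n;q)|^2=q$, giving $|G(a,n;q)|=q^{1/2}$. When $q$ is even there are exactly two solutions, $h\equiv0$ and $h\equiv q/2\ppmod{q}$, and each contributes a summand of modulus $q$, so $|G(a,n;q)|^2\le2q$ and $|G(a,n;q)|\le\sqrt{2}\,q^{1/2}$. In either case $G(a,n;q)\ll q^{1/2}$ with an absolute implied constant. The one point requiring a little care is the even modulus, where the extra residue $h\equiv q/2$ contributes; but since it adds at most one further term of size $q$, it affects only the implied constant. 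I do not expect any serious obstacle here: the virtue of this approach is that it treats odd and even $q$ uniformly and absorbs the linear term $nk$ without any completion of the square, so no appeal to multiplicativity, to the Weyl bound, or to the classical evaluation of Gauss sums is needed.
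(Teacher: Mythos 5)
Your proposal is correct and follows essentially the same route as the paper: squaring out, substituting $d=k-l$ (your $h$), and evaluating the resulting complete linear exponential sum over $l$, which survives only for $2d\equiv 0\ppmod{q}$, giving $|G(a,n;q)|^2\ll q$. Your treatment is in fact slightly more careful than the paper's, since you make explicit both the use of $(a,q)=1$ in reducing $q\mid 2ah$ to $q\mid 2h$ and the separate counting for odd and even $q$.
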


\begin{proof}
We have
\[\left|G(a,n;q)\right|^2=\sum_{k=1}^q\sum_{l=1}^q
e\left(\frac{a(k-l)(k+l)+n(k-l)}{q}\right).\]
By introducing a new argument $d=k-l$, we can rewrite the above sum as
\begin{align*}
\left|G(a,n;q)\right|^2
=&\sum_{d=1}^q\sum_{l=1}^q
e\left(\frac{ad(d+2l)+nd}{q}\right)
=\sum_{d=1}^qe\left(\frac{ad^2+nd}{q}\right)
\sum_{l=1}^qe\left(\frac{2adl}{q}\right).
\end{align*}
Here the inner sum is $q$ when $d=q/2\text{ or }q$ and
vanishes in the other cases. So the above sum is reduced to
\[\left|G(a,n;q)\right|^2\ll q.\]
Hence we obtain the lemma.
\end{proof}

We can now prove our approximation of $\widetilde{W}(\alpha)$.
The following estimate is also
classical.
For example, see \cite[Lecture 33]{Rademacher}.
However, we include its proof for completeness.
\begin{lem}
\label{theta}
If $(a,q)=1$ and $|\alpha|\le 1/2$, then we have
\[\widetilde{W}\left(\frac{a}{q}+\alpha\right)
=\frac{\sqrt{\pi}}{2}\cdot\frac{G(a,q)}{q}\cdot\frac{1}{\sqrt{z}}
+O\left(q^{1/2}+q^{1/2}N^{1/2}|\alpha|^{1/2}\right),\]
where the branch of $\sqrt{z}$ is chosen as its value at $1$ equals $1$.
\end{lem}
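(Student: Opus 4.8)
The plan is to convert $\widetilde{W}(a/q+\alpha)$ into a complete theta series over $\mathbb{Z}$ and apply Jacobi's formula term by term across residue classes. First I would absorb the two exponential factors into $z$: since $e^{-n^2/N}e(n^2\alpha)=e^{-n^2z}$, we have
\[\widetilde{W}\left(\frac{a}{q}+\alpha\right)=\sum_{n=1}^\infty e\left(\frac{an^2}{q}\right)e^{-n^2z}.\]
The summand depends on $n$ only through $n^2$, so it is even in $n$. Hence, setting $\Theta:=\sum_{n=-\infty}^\infty e(an^2/q)e^{-n^2z}$, we get $\widetilde{W}(a/q+\alpha)=(\Theta-1)/2$, and it suffices to evaluate $\Theta$.

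Next I would sort the integers $n$ into residue classes modulo $q$. Writing $n=k+qm$ with $k$ over a complete residue system and $m\in\mathbb{Z}$, the factor $e(an^2/q)=e(ak^2/q)$ leaves the $m$-sum, giving
\[\Theta=\sum_{k\bmod q}e\left(\frac{ak^2}{q}\right)\sum_{m=-\infty}^\infty\exp\!\left(-q^2z\left(m+\frac{k}{q}\right)^2\right).\]
To the inner sum I would apply Lemma \ref{Jacobi} with shift $k/q$ and parameter $w=q^2z/\pi$ (legitimate since $\Re z=1/N>0$, so $\Re w>0$); with the stated branch one checks $\sqrt{w}=q\sqrt{z}/\sqrt{\pi}$. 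After substituting and interchanging the two summations (justified by absolute convergence), the sum over $k$ collapses into a generalized Gaussian sum, yielding
\[\Theta=\frac{\sqrt{\pi}}{q\sqrt{z}}\sum_{m=-\infty}^\infty G(a,-m;q)\exp\!\left(-\frac{\pi^2m^2}{q^2z}\right).\]
The frequency $m=0$ contributes $G(a,0;q)=G(a,q)$, which after dividing by $2$ produces exactly the claimed main term $\tfrac{\sqrt{\pi}}{2}\tfrac{G(a,q)}{q}\tfrac{1}{\sqrt{z}}$; the constant $-1/2$ from $(\Theta-1)/2$ is absorbed into $O(q^{1/2})$.

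It remains to bound the frequencies $m\neq0$, which I expect to be the only real obstacle. Here I would use the uniform Gaussian sum bound $|G(a,-m;q)|\ll q^{1/2}$ of Lemma \ref{Gauss_sum}, together with $|\exp(-\pi^2m^2/(q^2z))|=\exp(-\pi^2m^2\,\Re(1/z)/q^2)$. The key computation is $\Re(1/z)=\Re z/|z|^2=1/(N|z|^2)$, so writing $c=\pi^2/(Nq^2|z|^2)$ and comparing the sum with an integral,
\[\sum_{m\ge1}e^{-cm^2}\le\int_0^\infty e^{-cx^2}\,dx=\frac{1}{2}\sqrt{\frac{\pi}{c}}=\frac{q\sqrt{N}\,|z|}{2\sqrt{\pi}}.\]
Collecting the prefactor $1/(q^{1/2}|z|^{1/2})$ gives a tail of size $\ll q^{1/2}N^{1/2}|z|^{1/2}$, and the elementary inequalities $|z|\le 1/N+2\pi|\alpha|$ and $\sqrt{u+v}\le\sqrt{u}+\sqrt{v}$ then convert this into $\ll q^{1/2}+q^{1/2}N^{1/2}|\alpha|^{1/2}$, as required. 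The delicate part throughout is tracking the exact power of $|z|$, i.e.\ the interplay between $\Re(1/z)$ and $|z|$, so that the $\alpha$-dependence of the error emerges with the sharp exponent $1/2$ rather than something weaker; the integral comparison for $\sum_{m\ge1}e^{-cm^2}$, valid uniformly for all $c>0$, is what lets me avoid splitting into cases.
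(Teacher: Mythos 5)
Your proposal is correct and follows essentially the same route as the paper: complete $\widetilde{W}$ to a two-sided theta series, split into residue classes modulo $q$, apply Jacobi's transformation (Lemma \ref{Jacobi}), extract the zero frequency as the Gaussian-sum main term, and bound the remaining frequencies via Lemma \ref{Gauss_sum} together with $\Re(1/z)=1/(N|z|^2)$ and a Gaussian integral comparison. The only differences are notational (you work with $z$ directly and keep the exact identity $\widetilde{W}=(\Theta-1)/2$, while the paper rescales to $w=z/\pi$ and writes $\widetilde{W}=\tfrac12\Theta+O(1)$), so there is nothing to correct.
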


\begin{proof}
Let us first consider a kind of Jacobi theta function
\[\Theta(\alpha)=\sum_{n=-\infty}^\infty e^{-n^2/N}e(n^2\alpha)\]
instead of $\widetilde{W}(\alpha)$.
Then the original $\widetilde{W}(\alpha)$ can be written as
\[\widetilde{W}(\alpha)=\frac{1}{2}\Theta(\alpha)+O(1).\]
Moreover, by introducing a new argument
\[w=\frac{z}{\pi}=\frac{1}{\pi N}-2i\alpha,\]
we can rewrite this series as
\[\Theta\left(\frac{a}{q}+\alpha\right)=\sum_{n=-\infty}^\infty e^{-\pi n^2w}
e\left(\frac{an^2}{q}\right).\]

We now divide the above series according to the residues of $n$ modulo $q$.
Then we have
\begin{align*}
\Theta\left(\frac{a}{q}+\alpha\right)
=&\sum_{k=1}^qe\left(\frac{ak^2}{q}\right)
\sum_{\substack{n=-\infty\\n\equiv k\pmod{q}}}^\infty e^{-\pi n^2w}
=\sum_{k=1}^qe\left(\frac{ak^2}{q}\right)
\sum_{m=-\infty}^\infty \exp\left(-\pi (m+k/q)^2q^2w\right).
\end{align*}
Remembering the condition $\Re w>0$,
we apply Lemma \ref{Jacobi} to the inner sum.
As a result, we get
\begin{align*}
\Theta\left(\frac{a}{q}+\alpha\right)
=&\sum_{k=1}^qe\left(\frac{ak^2}{q}\right)
\frac{1}{q\sqrt{w}}\sum_{n=-\infty}^\infty
\exp\left(-\frac{\pi n^2}{q^2w}-2\pi i\frac{nk}{q}\right)\\
=&\frac{1}{q\sqrt{w}}\sum_{n=-\infty}^\infty
\exp\left(-\frac{\pi n^2}{q^2w}\right)
\sum_{k=1}^q e\left(\frac{ak^2+nk}{q}\right).
\end{align*}
Recalling the definition of the generalized Gaussian sum,
we now pick up the term with $n=0$ as the main term.
This results that
\begin{align}
\Theta\left(\frac{a}{q}+\alpha\right)=&\frac{1}{q\sqrt{w}}\sum_{n=-\infty}^\infty
G(a,n;q)\exp\left(-\frac{\pi n^2}{q^2w}\right)\notag\\
\label{divided}
=&\frac{1}{q\sqrt{w}}\left\{G(a,q)+R(a,q;\alpha)\right\}
\end{align}
where 
\[R(a,q;\alpha)=\sum_{\substack{n=-\infty\\n\neq0}}^\infty
G(a,n;q)\exp\left(-\frac{\pi n^2}{q^2w}\right).\]
The remaining task is to estimate this error term $R(a,q;\alpha)$.
By Lemma \ref{Gauss_sum}, we can estimate as
\begin{equation}
R(a,q;\alpha)=\sum_{\substack{n=-\infty\\n\neq0}}^\infty
G(a,n;q)\exp\left(-\frac{\pi n^2}{q^2w}\right)
\ll\sqrt{q}\sum_{n=1}^\infty
\exp\left(-\frac{\pi n^2\delta}{q^2}\right),
\end{equation}
where the argument $\delta$ is given by
\[\delta=\Re\left(\frac{1}{w}\right)
=\frac{1}{\pi N}\cdot\frac{1}{1/(\pi N)^2+4\alpha^2}\asymp\frac{1}{N|w|^2}.\]
Then
\begin{align*}
R(a,q;\alpha)\ll&\sqrt{q}\sum_{n=1}^\infty
\exp\left(-\frac{\pi n^2\delta}{q^2}\right)
\ll\sqrt{q}\int_0^\infty\exp\left(-\frac{\pi t^2\delta}{q^2}\right)dt
\ll q^{3/2}N^{1/2}|w|.
\end{align*}
Substituting this estimate into (\ref{divided}), we get
\begin{align*}
\Theta\left(\frac{a}{q}+\alpha\right)
=\frac{G(a,q)}{q\sqrt{w}}
+O\left(q^{1/2}N^{1/2}|w|^{1/2}\right).
\end{align*}
Returning to the original notations $\widetilde{W}(\alpha)$ and $z=\pi w$,
we can rewrite this as
\[\widetilde{W}\left(\frac{a}{q}+\alpha\right)=
\frac{\sqrt{\pi}}{2}\cdot\frac{G(a,q)}{q}\cdot\frac{1}{\sqrt{z}}
+O\left(1+q^{1/2}N^{1/2}|z|^{1/2}\right).\]
Finally, since $|z|^{1/2}\ll N^{-1/2}+|\alpha|^{1/2}$,
we obtain the lemma.
\end{proof}

\section{Proof of Theorem \ref{Main_Theorem}}
Let us consider
\[R(n)=\sum_{k+m^2=n}\Lambda(k),\]
which is a variant of the representation function for the sum of a prime and a square.
Then this representation function $R(n)$ appears as the coefficient
in the following power series:
\[\widetilde{S}(\alpha)\widetilde{W}(\alpha)
=\sum_{n=1}^\infty R(n)e^{-n/N}e(n\alpha).\]
We also define the singular series for our problem
\[\mathfrak{S}(n)=
\begin{cases}
\displaystyle \prod_{p>2}\left(1-\frac{(n/p)}{p-1}\right) &\text{(when $n$ is not a square)},\\
\displaystyle 0 &\text{(when $n$ is a square)},
\end{cases}\]
which was first introduced by Hardy and Littlewood \cite[Conjecture H]{HL}.

The main body of our argument is to estimate the following mean square error
as usual in the estimates of the exceptional sets.
\begin{thm}
\label{mean_thm}
Assume GRH. If $N\ge 2$, then we have
\begin{equation}
\label{yuta_L2}
\sum_{n\le N}
\left|R(n)-\mathfrak{S}(n)\sqrt{n}(1+O(n^{-\eta}))\,\right|^2
\ll (N\log N)^{3/2}
\end{equation}
for some positive absolute constant $\eta>0$.
\end{thm}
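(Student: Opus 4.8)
The plan is to apply the circle method with the power-series generating functions $\widetilde{S}(\alpha)$ and $\widetilde{W}(\alpha)$. By Parseval's identity, the left-hand side of (\ref{yuta_L2}) is essentially governed by the integral $\int_0^1 |\widetilde{S}(\alpha)\widetilde{W}(\alpha) - (\text{main term generating function})|^2\,d\alpha$, where the main-term series has coefficients matching $\mathfrak{S}(n)\sqrt{n}$ weighted by $e^{-n/N}$. The key point is that the weight $e^{-n/N}$ restricts attention to $n \ll N$ and allows the use of Lemmas \ref{LP} and \ref{theta} directly through the explicit formula, avoiding the truncation losses inherent in Mikawa's trigonometric-polynomial approach.

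**First I would** dissect the unit interval into major and minor arcs using the Farey dissection with parameters $P, Q$ satisfying $PQ = N$. On the major arcs $\alpha = a/q + \beta$ with $q \le P$ and $|\beta|$ small, I substitute the approximations $\widetilde{S}(a/q+\beta) \approx \frac{\mu(q)}{\phi(q)}\frac{1}{z}$ from Lemma \ref{LP} and $\widetilde{W}(a/q+\beta) \approx \frac{\sqrt{\pi}}{2}\frac{G(a,q)}{q}\frac{1}{\sqrt{z}}$ from Lemma \ref{theta}. The product of these main terms, after summing over $a \pmod q$ and over $q$, should reconstruct the singular series $\mathfrak{S}(n)$ together with the archimedean factor producing $\sqrt{n}$; the Gaussian sum $G(a,q)$ feeds into the local densities at each prime, and the factor $1/(z^{1/2} \cdot z) = z^{-3/2}$ Fourier-inverts to the $\sqrt{n}$ main term. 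The error terms from Lemma \ref{theta}, of size $q^{1/2} + q^{1/2}N^{1/2}|\beta|^{1/2}$, must be squared, multiplied against $|\widetilde{S}|$, and integrated; here I would invoke Lemma \ref{LP} again to bound the mean square of $\widetilde{S}$ over the major arcs.

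**The main obstacle** will be the third refinement described in the introduction: the careful treatment of the extension of the major arcs. When one extends the range of $\beta$ beyond the natural major-arc width, the generating function $\widetilde{W}$ no longer decays fast enough for a crude bound, and Mikawa's dual large sieve estimate loses logarithmic factors. My plan is to split these extended arcs into two regimes according to the size of $|\beta|$ (equivalently the size of $|z|$): on the inner part I would exploit the decay $\widetilde{W}(a/q+\beta) \ll q^{1/2}|z|^{-1/2} \ll q^{1/2} N^{1/2}$ coming from Lemma \ref{theta} together with the large sieve inequality, while on the outer part, where $\widetilde{W}$ genuinely decays, I would apply Bessel's inequality to the Fourier coefficients to capture the extra savings. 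Balancing the two contributions against the choice of $P$ and $Q$ is what ultimately yields the exponent $3/2$ rather than $3$ on the logarithm.

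**Finally I would** handle the minor arcs, where $q > P$. Here neither generating function has a useful main term, so I would bound $\int_{\mathfrak{m}} |\widetilde{S}\widetilde{W}|^2\,d\alpha$ by pulling out $\sup_{\mathfrak{m}}|\widetilde{W}|$ and using the $L^2$ bound on $\widetilde{S}$, or symmetrically. The pointwise bound $\widetilde{W}(a/q+\beta) \ll q^{1/2}|z|^{-1/2} + q^{1/2}(1 + N^{1/2}|\beta|^{1/2})$ from Lemma \ref{theta}, combined with $q > P$ on the minor arcs, should make this contribution negligible compared to $(N\log N)^{3/2}$ once $P$ is chosen as a small power of $N$ times logarithmic factors. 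Assembling the major-arc main term (which reproduces $\mathfrak{S}(n)\sqrt{n}(1 + O(n^{-\eta}))$, the secondary factor accounting for the discrepancy between the smoothed sum and the sharp one), the extended-arc error, and the minor-arc error then gives the claimed bound.
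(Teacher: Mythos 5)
Your skeleton (Languasco--Perelli power series, Parseval, Farey dissection with $PQ=N$, major/minor split, and a two-regime treatment of the error coming from extending the major arcs) is the same as the paper's, but the decisive detail of the paper's refinement is assigned backwards in your plan, and as stated it would fail. The paper splits the extension of $\int_{|\alpha|\le 1/qQ}$ to $\int_{|\alpha|\le 1/2}$ at $|\alpha|=1/4qP$. On the \emph{inner} pieces $1/qQ<|\alpha|\le 1/4qP$ the extended arcs around distinct Farey fractions $a/q$, $q\le P$, are pairwise disjoint (since $|a/q-a'/q'|\ge 1/qq'\ge \frac{1}{2qP}+\frac{1}{2q'P}$), and it is precisely this disjointness that permits Bessel's inequality, giving $\sum_{m\le N}|r_m^{(1)}|^2\ll \int_{\mathfrak{n}}|TU|^2\,d\alpha\ll Q^2P=QN=N^{3/2}(\log N)^{1/2}$. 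On the \emph{outer} pieces $1/4qP<|\alpha|\le 1/2$ the extended arcs overlap massively (every point of the circle lies in $\asymp P^2$ of them), so Bessel is useless there: writing the error as the Fourier coefficient of a single function $g$ which is a sum of $\asymp P^2$ overlapping kernels, Bessel only returns $\int_0^1|g|^2$, whose $\asymp P^4$ off-diagonal terms carry no cancellation unless one exploits the quasi-orthogonality of the phases $e(-am/q)$ over $m\le N$ --- which is exactly the dual large sieve, the tool the paper uses there (after Cauchy--Schwarz in $\alpha$). Your proposal puts the large sieve on the inner regime and Bessel on the outer one. The outer half of your plan has the gap just described, and the inner half loses a logarithm: running Cauchy--Schwarz plus the dual large sieve on the inner regime gives
\begin{equation*}
N(\log N)\sum_{q\le P}\frac{\mu^2(q)}{q\phi(q)}\int_{1/qQ<|\alpha|\le 1/4qP}\frac{d\alpha}{\alpha^2}\ll QN(\log N)^2=N^{3/2}(\log N)^{5/2},
\end{equation*}
because $\int\alpha^{-2}\,d\alpha$ is dominated by the inner endpoint $1/qQ$; this exceeds the target $(N\log N)^{3/2}$ by a factor $\log N$ and is essentially the Mikawa/Perelli--Zaccagnini loss that the paper's ``third point'' is designed to remove. (The large sieve is affordable only on the outer regime, where $\int_{|\alpha|>1/4qP}\alpha^{-2}\,d\alpha\ll qP$ gives $PN(\log N)^2=(N\log N)^{3/2}$.)

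A second, smaller error is your parameter choice and the claim that the minor arcs are negligible once ``$P$ is chosen as a small power of $N$ times logarithmic factors.'' The only available minor-arc bound is $\sup_{\mathfrak{m}}|\widetilde{W}|^2\ll N/P+Q+N/Q\ll Q$ (Dirichlet plus Lemma \ref{theta}), so $\int_{\mathfrak{m}}\ll Q\int_0^1|\widetilde{S}|^2\,d\alpha\ll QN\log N=(N^2/P)\log N$; if $P$ were a small power of $N$ this would be nearly $N^2\log N$, hopeless. One is forced to take $P$ as large as disjointness of the major arcs allows: the paper takes $Q=(N\log N)^{1/2}$, $P=N/Q=(N/\log N)^{1/2}$, at which point the minor-arc term $QN\log N$ and the dominant major-arc terms $PN(\log N)^2$ are \emph{both} exactly $(N\log N)^{3/2}$ --- the minor arcs are a joint dominant contribution, not negligible, and this balancing is what fixes the exponent $3/2$. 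Finally, note that the passage from the Fourier coefficients of $TU$ to $\mathfrak{S}(n)\sqrt{n}(1+O(n^{-\eta}))$, which you only gesture at, requires the Hankel-type formula $\int_{-1/2}^{1/2}z^{-3/2}e(-n\alpha)\,d\alpha=e^{-n/N}2\sqrt{n}/\Gamma(1/2)+O(1/n)$ together with the Br\"unner--Perelli--Pintz estimate $\mathfrak{S}(n,P)=\mathfrak{S}(n)+O(n^{-\eta})$, which is a second, separate use of GRH.
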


\begin{proof}
We use the circle method. We shall start with giving the Farey dissection.

Let us take $Q=(N\log N)^{1/2}$ as the order of the dissection,
and let $I$ be the modified unit interval, i.e.
\[I=(1/Q,1+1/Q].\]
For a pair of positive integers $a,q$ such that $a\le q$ and $(a,q)=1$,
we shall denote by $\xi_{a,q}$ the Farey arcs which is defined by
\[\xi_{a,q}=\left(\frac{a}{q}-\frac{1}{qQ}, \frac{a}{q}+\frac{1}{qQ}\right].\]
Let us introduce $P=N/Q$.
Then the Farey arcs $\xi_{a,q}$
with $q\le P$ are pairwise disjoint for sufficiently large $N$.
So let us define the major arcs:
\[\mathfrak{M}=\coprod_{q\le P}\puni_{a\ppmod{q}}\xi_{a,q}\]
and define the minor arcs:
\[\mathfrak{m}=I\setminus\mathfrak{M}.\]

Let us introduce the following approximants for our generating functions
\begin{equation}
T(\alpha):=
\begin{cases}
\displaystyle
\vspace{3mm}
\frac{\mu(q)}{\phi(q)}\cdot\frac{1}{(z+2\pi ia/q)}
&\left(\text{when $\alpha\in\xi_{a,q},\ q\le P$}\right),\\
0&\left(\text{when $\alpha\in\mathfrak{m}$}\right),
\end{cases}
\end{equation}
\begin{equation}
U(\alpha):=
\begin{cases}
\displaystyle
\vspace{3mm}
\frac{\sqrt{\pi}}{2}
\cdot\frac{G(a,q)}{q}\cdot\frac{1}{(z+2\pi ia/q)^{1/2}}
&\left(\text{when $\alpha\in\xi_{a,q},\ q\le P$}\right),\\
0&\left(\text{when $\alpha\in\mathfrak{m}$}\right).
\end{cases}
\end{equation}
Consider the Fourier coefficients $\widehat{TU}(n)$ of $T(\alpha)U(\alpha)$
which is defined by
\begin{equation}
\label{fourier}
e^{-n/N}\widehat{TU}(n)=
\int_0^1T(\alpha)U(\alpha)e(-n\alpha)\,d\alpha.
\end{equation}
Then we get by the Parseval identity that
\begin{equation}
\label{parseval}
\sum_{n=1}^\infty e^{-2n/N}\left|R(n)-\widehat{TU}(n)\right|^2
=\int_0^1
\left|\widetilde{S}(\alpha)\widetilde{W}(\alpha)
-T(\alpha)U(\alpha)\right|^2\,d\alpha.
\end{equation}
We devide this last integral as
\begin{equation}
\label{arc_decomp}
\int_0^1
\left|\widetilde{S}(\alpha)\widetilde{W}(\alpha)
-T(\alpha)U(\alpha)\right|^2\,d\alpha
=\int_\mathfrak{M}+\int_\mathfrak{m}
\end{equation}
into the two integrals over the major arcs and the minor arcs,
and then we shall estimate these integrals separately.

We first treat the integral over the minor arcs.
On the minor arcs, the argument $\alpha$ stands in the range
\[\left|\alpha-\frac{a}{q}\right|\le\frac{1}{qQ}\]
with some Farey fraction $a/q$ with $P<q\le Q$ by Dirichlet's theorem.
So by Lemmas \ref{Gauss_sum} and \ref{theta},
we get (see (\ref{z_estimate}) below)
\[\widetilde{W}(\alpha)^2\ll \frac{N}{P}+Q+\frac{N}{Q}\ll Q.\]
Hence by the Parseval identity, we have
\begin{align*}
\int_\mathfrak{m}
=&\int_\mathfrak{m}
\left|\widetilde{S}(\alpha)\widetilde{W}(\alpha)\right|^2\,d\alpha
\ll\sup_{\alpha\in\mathfrak{m}}\left|\widetilde{W}(\alpha)\right|^2
\int_0^1
\left|\widetilde{S}(\alpha)\right|^2\,d\alpha
\ll Q\sum_{n=1}^\infty\Lambda(n)^2e^{-2n/N}.
\end{align*}
Using the prime number theorem, we obtain the following estimate for the minor arcs:
\begin{equation}
\label{minor_arc}
\int_\mathfrak{m}\ll QN(\log N)=(N\log N)^{3/2}.
\end{equation}

We next estimate the integral over the major arcs.
We divide the integrand as
\begin{align*}
\widetilde{S}(\alpha)\widetilde{W}(\alpha)-T(\alpha)U(\alpha)
=&\left(\widetilde{S}(\alpha)\widetilde{W}(\alpha)-
T(\alpha)\widetilde{W}(\alpha)\right)
+\left(T(\alpha)\widetilde{W}(\alpha)-T(\alpha)U(\alpha)\right)\\
=&E_1+E_2,\text{ say.}
\end{align*}
And we estimate the integral over major arcs by separating these two errors as
\[\int_\mathfrak{M}
\ll\int_\mathfrak{M}\left|E_1\right|^2d\alpha
+\int_\mathfrak{M}\left|E_2\right|^2d\alpha=I_1+I_2.\]
Our estimates for both of these integrals start with
the dissection of them into small arcs:
\[\int_\mathfrak{M}=\sum_{q\le P}\psum_{a\ppmod{q}}\int_{\xi_{a,q}}.\]

We first treat $I_1$.
By Lemmas \ref{Gauss_sum} and \ref{theta},
for any Farey fraction $a/q$ with $q\le P$, we have
\[\widetilde{W}\left(\frac{a}{q}+\alpha\right)^2
\ll\frac{1}{\sqrt{q|z|}}+q^{1/2}+q^{1/2}N^{1/2}|\alpha|^{1/2}.\]
We can estimate $|z|$ from below by its real and imaginary parts:
\begin{equation}
\label{z_estimate}
|z|\gg\max\left(\frac{1}{N},|\alpha|\right).
\end{equation}
Hence we obtain the following formulae:
\begin{gather}
\label{center_arc}
\widetilde{W}\left(\frac{a}{q}+\alpha\right)^2\ll\frac{N}{q},\\
\label{side_arc}
\widetilde{W}\left(\frac{a}{q}+\alpha\right)^2
\ll\frac{1}{q|\alpha|}+q+qN|\alpha|\ll\frac{1}{q|\alpha|},
\end{gather}
for any Farey fraction $a/q$ with $q\le P$ and $|\alpha|\le1/qQ$.
The integral which we have to estimate is
\[I_1=\sum_{q\le P}\psum_{a\ppmod{q}}
\int_{-1/qQ}^{1/qQ}\left|\widetilde{S}\left(\frac{a}{q}+\alpha\right)
-\frac{\mu(q)}{\phi(q)}\frac{1}{z}\right|^2
\left|\widetilde{W}\left(\frac{a}{q}+\alpha\right)\right|^2d\alpha.\]
We shall estimate each of these integrals by dividing into two parts as
\begin{align*}
\int_{-1/qQ}^{1/qQ}
=\int_{-1/N}^{1/N}+\int_{1/N<|\alpha|\le1/qQ}
=J_1(a,q)+J_2(a,q).
\end{align*}
For $J_1(a,q)$, we use the estimate (\ref{center_arc}) and Lemma \ref{LP}.
This results
\begin{align}
\sum_{q\le P}\psum_{a\ppmod{q}}J_1(a,q)\notag
=&\sum_{q\le P}\psum_{a\ppmod{q}}
\int_{-1/N}^{1/N}\left|\widetilde{S}\left(\frac{a}{q}+\alpha\right)
-\frac{\mu(q)}{\phi(q)}\frac{1}{z}\right|^2
\left|\widetilde{W}\left(\frac{a}{q}+\alpha\right)\right|^2d\alpha\notag\\
\ll&\sum_{q\le P}\frac{N}{q}\psum_{a\ppmod{q}}
\int_{-1/N}^{1/N}\left|\widetilde{S}\left(\frac{a}{q}+\alpha\right)
-\frac{\mu(q)}{\phi(q)}\frac{1}{z}\right|^2d\alpha\notag\\
\label{major_arc_11}
\ll&PN(\log N)^2.
\end{align}
For $J_2(a,q)$, we use the estimate (\ref{side_arc}) and Lemma \ref{LP}.
In order to apply (\ref{side_arc}), we employ the following dyadic subdivision:
\begin{align*}
\int_{1/N<|\alpha|\le1/qQ}\ll
\sum_{k=1}^K\int_{\eta_k<|\alpha|\le2\eta_k}
+\int_{1/N<|\alpha|\le\eta_K},
\end{align*}
where $\eta_k$'s are given by
\[\eta_k:=\frac{1}{2^kqQ}\ (k=1,2,3,\dots),\quad \eta_{K+1}<1/N\le\eta_K.\]
Then for $\eta\in[1/2N,1/qQ]$,
(\ref{side_arc}) and Lemma \ref{LP} gives the estimate
\begin{align*}
&\psum_{a\ppmod{q}}\int_{\eta<|\alpha|\le 2\eta}
\left|\widetilde{S}\left(\frac{a}{q}+\alpha\right)
-\frac{\mu(q)}{\phi(q)}\frac{1}{z}\right|^2
\left|\widetilde{W}\left(\frac{a}{q}+\alpha\right)\right|^2d\alpha\\
\ll&\psum_{a\ppmod{q}}\int_{\eta<|\alpha|\le 2\eta}\frac{1}{q|\alpha|}
\left|\widetilde{S}\left(\frac{a}{q}+\alpha\right)
-\frac{\mu(q)}{\phi(q)}\frac{1}{z}\right|^2d\alpha\\
\ll&\frac{qN\eta(\log N)^2}{q\eta}
=N(\log N)^2.
\end{align*}
Since our $K$ satisfies
\[1/N\le\eta_K=\frac{1}{2^KqQ}\]
or
\[K+1\ll\log\frac{2N}{qQ},\]
we have
\begin{equation}
\label{major_arc_12}
\sum_{q\le P}\psum_{a\ppmod{q}}J_2(a,q)
\ll N(\log N)^2\sum_{q\le P}\log\frac{2N}{qQ}
\ll PN(\log N)^2.
\end{equation}
Here we used the following estimate:
\[\sum_{q\le P}\log\frac{2N}{qQ}
=\sum_{q\le P}\log\frac{2P}{q}
=\sum_{q\le P}\int_{q}^{2P}\frac{du}{u}\ll P.\]
By (\ref{major_arc_11}) and (\ref{major_arc_12}), we have
\begin{equation}
\label{major_arc_1}
I_1\ll PN(\log N)^2=(N\log N)^{3/2}.
\end{equation}

We now estimate $I_2$.
On each small arc $\xi_{a,q}$, Lemma \ref{theta} gives that
\[\left|\widetilde{W}(\alpha)-U(\alpha)\right|^2\ll P.\]
And since we have
\begin{align*}
\int_{\xi_{a,q}}\left|T(\alpha)\right|^2d\alpha
\ll&\frac{1}{\phi(q)^2}\int_{-1/qQ}^{1/qQ}\frac{d\alpha}{|z|^2}\\
\ll&\frac{1}{\phi(q)^2}
\left(\int_{|\alpha|\le\min\left(1/N,1/qQ\right)}\frac{d\alpha}{|z|^2}
+\int_{1/N<|\alpha|\le1/qQ}\frac{d\alpha}{|z|^2}\right)\\
\ll&\frac{1}{\phi(q)^2}\left(N
+\int_{1/N}^{1/qQ}\frac{d\alpha}{\alpha^2}\right)
\ll\frac{N}{\phi(q)^2},
\end{align*}
we can estimate $I_2$ as
\begin{align}
I_2
\ll&P\sum_{q\le P}\psum_{a\ppmod{q}}
\int_{\xi_{a,q}}\left|T(\alpha)\right|^2\,d\alpha\notag\\
\label{major_arc_2}
\ll&PN\sum_{q\le P}\frac{1}{\phi(q)}\ll PN(\log N)=N^{3/2}(\log N)^{1/2}.
\end{align}

The above estimates (\ref{major_arc_1}), (\ref{major_arc_2}) of $I_1, I_2$
give
\[\int_\mathfrak{M}\ll (N\log N)^{3/2}\]
for the integral over the major arcs.
Now, we combine (\ref{minor_arc}) with this inequality
and recall (\ref{parseval}) and (\ref{arc_decomp}) to get
\[\sum_{n=1}^\infty e^{-2n/N}\left|R(n)-\widehat{TU}(n)\right|^2
\ll (N\log N)^{3/2}.\]
In particular, we have
\begin{equation}
\label{pre_mean}
\sum_{n\le N}\left|R(n)-\widehat{TU}(n)\right|^2
\ll (N\log N)^{3/2}
\end{equation}
since $e^{-2n/N}\gg1$ for $n\le N$.

The next task is to extend each small arcs to the whole arc $I$
in the Fourier coefficients formula (\ref{fourier}).
Recalling that $T(\alpha)$ and $U(\alpha)$ have non-zero values
only on the major arcs,
we can divide (\ref{fourier}) into the integrals over small arcs:
\begin{align*}
e^{-m/N}\widehat{TU}(m)
=&\frac{\sqrt{\pi}}{2}\sum_{q\le P}\frac{\mu(q)}{q\phi(q)}\psum_{a\ppmod{q}}G(a,q)
\int_{\xi_{a,q}}\frac{e(-m\alpha)}{(z+2\pi ia/q)^{3/2}}\,d\alpha\\
=&\frac{\sqrt{\pi}}{2}\sum_{q\le P}\frac{\mu(q)}{q\phi(q)}
\psum_{a\ppmod{q}}G(a,q)e\left(-\frac{am}{q}\right)
\int_{-1/qQ}^{1/qQ}\frac{e(-m\alpha)}{z^{3/2}}\,d\alpha
\end{align*}
where $z^{3/2}$ is defined by
\[z^{3/2}=\exp\left(\frac{3}{2}\log z\right)\]
with the principal branch of $\log z$.
Here we have to extend the range of the integral
\[\int_{-1/qQ}^{1/qQ}\frac{e(-m\alpha)}{z^{3/2}}\,d\alpha\]
to the whole arc $[-1/2,1/2]$. The errors arising from this extension is
\[r_m=\frac{\sqrt{\pi}}{2}\sum_{q\le P}\frac{\mu(q)}{q\phi(q)}
\psum_{a\ppmod{q}}G(a,q)e\left(-\frac{am}{q}\right)
\int_{1/qQ<|\alpha|\le1/2}
\frac{e(-m\alpha)}{z^{3/2}}\,d\alpha,\]
and we shall estimate its squared mean value
\[\sum_{m\le N}|r_m|^2.\]
We divide each of the above integrals into two parts as
\[\int_{1/qQ<|\alpha|\le1/2}
=\int_{1/qQ<|\alpha|\le1/4qP}+\int_{1/4qP<|\alpha|\le1/2}.\]
Then the former small extended arcs
\[\left[\frac{a}{q}-\frac{1}{4qP},\frac{a}{q}-\frac{1}{qQ}\right],\quad
\left[\frac{a}{q}+\frac{1}{qQ},\frac{a}{q}+\frac{1}{4qP}\right]\]
for $q\le P$ are pairwise disjoint
since for two distinct Farey fraction $a/q, a'/q'$ with $q,q'\le P$, we have
\[\left|\frac{a}{q}-\frac{a'}{q'}\right|\ge\frac{1}{qq'}
\ge\frac{1}{2qP}+\frac{1}{2q'P}.\]
So let us introduce
\[\mathfrak{n}=\coprod_{q\le P}\puni_{a\ppmod{q}}\left(
\left[\frac{a}{q}-\frac{1}{4qP},\frac{a}{q}-\frac{1}{qQ}\right]
\cup
\left[\frac{a}{q}+\frac{1}{qQ},\frac{a}{q}+\frac{1}{4qP}\right]\right).\]
Then we find that
\[r_m=r_m^{(1)}+r_m^{(2)},\]
where
\[r_m^{(1)}=\int_{\mathfrak{n}}T(\alpha)U(\alpha)e(-m\alpha)d\alpha,\]
\[r_m^{(2)}=\frac{\sqrt{\pi}}{2}\sum_{q\le P}\frac{\mu(q)}{q\phi(q)}
\psum_{a\ppmod{q}}G(a,q)e\left(-\frac{am}{q}\right)
\int_{1/4qP<|\alpha|\le1/2}
\frac{e(-m\alpha)}{z^{3/2}}\,d\alpha.\]

We first treat $r_m^{(1)}$.
For this integral, we can use the Bessel inequality, and we get
\[\sum_{m\le N}\left|r_m^{(1)}\right|^2
\ll\int_{\mathfrak{n}}\left|T(\alpha)U(\alpha)\right|^2d\alpha.\]
Again returning to small arcs, we have
\begin{align*}
\sum_{m\le N}\left|r_m^{(1)}\right|^2
\ll&\sum_{q\le P}\frac{\mu^2(q)}{q^2\phi(q)^2}
\psum_{a\ppmod{q}}\left|G(a,q)\right|^2
\int_{1/qQ<|\alpha|\le1/4qP}
\frac{1}{|z|^3}d\alpha\\
=&\sum_{q\le P}\frac{\mu^2(q)}{q\phi(q)}
\int_{1/qQ<|\alpha|\le1/4qP}\frac{1}{|\alpha|^3}d\alpha
\ll Q^2\sum_{q\le P}\frac{q}{\phi(q)}\ll Q^2P=QN.
\end{align*}
Thus we have
\begin{equation}
\label{extension_neighbouring}
\sum_{m\le N}\left|r_m^{(1)}\right|^2\ll N^{3/2}(\log N)^{1/2}.
\end{equation}

We now deal with $r_m^{(2)}$.
Following Mikawa \cite{Mikawa}, we use the large sieve for these integrals.
Our mean square error
\[\sum_{m\le N}\left|r_m^{(2)}\right|^2\]
is a constant multiple of
\[\sum_{m\le N}\left|\sum_{q\le P}\frac{\mu(q)}{q\phi(q)}
\psum_{a\ppmod{q}}G(a,q)e\left(-\frac{am}{q}\right)
\int_{1/4qP<|\alpha|\le 1/2}\frac{e(-m\alpha)}{z^{3/2}}\,d\alpha\right|^2.\]
In the absolute sign of each above term,
we first change the order of summation and integration:
\begin{align*}
&\sum_{q\le P}\frac{\mu(q)}{q\phi(q)}
\psum_{a\ppmod{q}}G(a,q)e\left(-\frac{am}{q}\right)
\int_{1/4qP<|\alpha|\le 1/2}\frac{e(-m\alpha)}{z^{3/2}}\,d\alpha\\
=&
\int_{1/N<|\alpha|\le1/2}\frac{e(-m\alpha)}{z^{3/2}}
\sum_{1/4|\alpha|P<q\le P}\frac{\mu(q)}{q\phi(q)}
\psum_{a\ppmod{q}}G(a,q)e\left(-\frac{am}{q}\right)\,d\alpha.
\end{align*}
Then by the Cauchy-Schwarz inequality, we get
\begin{align*}
&\left|\sum_{q\le P}\frac{\mu(q)}{q\phi(q)}
\psum_{a\ppmod{q}}G(a,q)e\left(-\frac{am}{q}\right)
\int_{1/4qP<|\alpha|\le 1/2}\frac{e(-m\alpha)}{z^{3/2}}\,d\alpha\right|^2\\
\ll&\left(\int_{1/N<|\alpha|\le1/2}\frac{1}{|z|}d\alpha\right)
\left(\int_{1/N<|\alpha|\le 1/2}\frac{1}{|z|^2}
\left|\sum_{1/4|\alpha|P<q\le P}\frac{\mu(q)}{q\phi(q)}
\psum_{a\ppmod{q}}G(a,q)e\left(-\frac{am}{q}\right)\right|^2\,d\alpha\right).
\end{align*}
The first integral can be estimated as
\[\int_{1/N<|\alpha|\le1/2}\frac{1}{|z|}d\alpha
\ll\int_{1/N<|\alpha|\le1/2}\frac{1}{|\alpha|}d\alpha
\ll\log N.\]
And for the second integral, after taking the summation over $m$,
we use the following estimate obtained through the ``dual'' large sieve:
\begin{align*}
&\sum_{m\le N}
\left|\sum_{1/4|\alpha|P<q\le P}\frac{\mu(q)}{q\phi(q)}
\psum_{a\ppmod{q}}G(a,q)e\left(-\frac{am}{q}\right)\right|^2\\
\ll&\left(N+P^2\right)\sum_{1/4|\alpha|P<q\le P}
\psum_{a\ppmod{q}}\frac{\mu^2(q)\left|G(a,q)\right|^2}{q^2\phi(q)^2}
\ll N\sum_{1/|\alpha|P<q\le P}\frac{\mu^2(q)}{q\phi(q)}.
\end{align*}
Combining the above estimates, we get the estimate
\begin{align*}
&\sum_{m\le N}\left|\sum_{q\le P}\frac{\mu(q)}{q\phi(q)}
\psum_{a\ppmod{q}}G(a,q)e\left(-\frac{am}{q}\right)
\int_{1/4qP<|\alpha|\le 1/2}
\frac{e(-m\alpha)}{z^{3/2}}\,d\alpha\right|^2\\
\ll&N(\log N)\int_{1/N<|\alpha|\le 1/2}\frac{1}{|\alpha|^2}
\sum_{1/4|\alpha|P<q\le P}\frac{\mu^2(q)}{q\phi(q)}d\alpha\\
\ll&N(\log N)\sum_{q\le P}\frac{\mu^2(q)}{q\phi(q)}
\int_{1/4qP<|\alpha|\le 1/2}\frac{d\alpha}{|\alpha|^2}\\
\ll&PN(\log N)\sum_{q\le P}\frac{\mu^2(q)}{\phi(q)}\ll PN(\log N)^2
\end{align*}
or
\begin{equation}
\label{extension_remained}
\sum_{m\le N}\left|r_m^{(2)}\right|^2\ll (N\log N)^{3/2}.
\end{equation}

Combining (\ref{extension_neighbouring}) and (\ref{extension_remained}), we have
\begin{equation}
\label{expand_error}
\sum_{m\le N}\left|r_m\right|^2
\ll\sum_{m\le N}\left|r_m^{(1)}\right|^2+\sum_{m\le N}\left|r_m^{(2)}\right|^2
\ll(N\log N)^{3/2}.
\end{equation}

We next calculate explicitly the extended integral
\[V(m,P)=\left\{\frac{\sqrt{\pi}}{2}\sum_{q\le P}\frac{\mu(q)}{q\phi(q)}
\psum_{a\ppmod{q}}G(a,q)e\left(-\frac{am}{q}\right)\right\}
\int_{-1/2}^{1/2}\frac{e(-m\alpha)}{z^{3/2}}d\alpha.\]
We use the following integral formula
\begin{equation}
\label{integral_formula}
\int_{-1/2}^{1/2}\frac{e(-n\alpha)}{z^{3/2}}d\alpha
=\displaystyle e^{-n/N}\frac{2\sqrt{n}}{\Gamma(1/2)}+O\left(\frac{1}{n}\right).
\end{equation}
This is a simple corollary of the Hankel integral formula.
For the proof, see \cite[Lemma 4]{LZ_recent}.
Since it holds that \cite[Lemma 3, 4]{BPP}
\[\psum_{a\ppmod{q}}G(a,q)e\left(-\frac{am}{q}\right)\ll q\]
for any square-free number $q$, the error term arising from
the error term of (\ref{integral_formula}) is less than
\[\ll\frac{1}{m}\sum_{q\le P}\frac{1}{\phi(q)}\ll\frac{\log N}{m}.\]
So $V(m,P)$ is calculated explicitly as
\begin{align}
V(m,P)
\label{coefficient_TU}
=&e^{-m/N}\sqrt{m}\mathfrak{S}(m,P)+O\left(\frac{\log N}{m}\right)
\end{align}
where
\[\mathfrak{S}(n,P):=\sum_{q\le P}
\psum_{a\ppmod{q}}\frac{\mu(q)}{q\phi(q)}G(a,q)e\left(\frac{-an}{q}\right).\]
By (\ref{pre_mean}) and (\ref{expand_error}), we get
\[\sum_{n\le N}\left|R(n)-\sqrt{n}\mathfrak{S}(n,P)\right|^2
\ll (N\log N)^{3/2}.\]
It can be shown \cite[Lemma 21]{BPP}, under the assumption of GRH
that there exists a positive absolute constant $\eta$
\[\mathfrak{S}(n,P)=\mathfrak{S}(n)+O(n^{-\eta}).\]
Therefore Theorem \ref{mean_thm} follows.
\end{proof}

\begin{proof}[Proof of Theorem \ref{Main_Theorem}]
First we shall modify Theorem \ref{mean_thm} to
\begin{equation}
\label{modified_mean}
\sum_{n\le N}
\left|\sum_{p+m^2=n}(\log p)-\mathfrak{S}(n)\sqrt{n}(1+O(n^{-\eta}))\right|^2
\ll (N\log N)^{3/2}.
\end{equation}
In order to justify this modification, we proceed as follows.
We first remove the restriction that $p$ is a prime
from the error which we have to estimate:
\[\sum_{n\le N}\left(\sum_{\substack{p^k+m^2=n\\k\ge 2}}(\log p)\right)^2
\ll(\log N)^2\left(\sum_{n\le N}\left(\sum_{m_1^2+m_2^2=n}1\right)^2
+\sum_{n\le N}
\left(\sum_{\substack{m_1^k+m_2^2=n\\k\ge 3}}1\right)^2\right).\]
Let us call these sum as
\[=(\log N)^2\left(\Sigma_1+\Sigma_2\right).\]
For $\Sigma_1$, we use Jacobi's two-square theorem to get
\[\sum_{m_1^2+m_2^2=n}1
=\sum_{\substack{d|n\\d:\,\text{odd}}}(-1)^{(d-1)/2}+O(1)\ll\tau(n).\]
This estimate gives the following estimate for $\Sigma_1$:
\[\Sigma_1\ll\sum_{n\le N}\tau(n)^2\ll N(\log N)^3.\]
For $\Sigma_2$, we first notice that for $k\ge3$ and $n\le N$
\[\sum_{\substack{m_1^k+m_2^2=n}}1
\ll\sum_{m^k\le n}1\ll\sum_{m^3\le n}1\ll N^{1/3}.\]
Hence we get
\[\Sigma_2\ll
N^{1/3}\sum_{n\le N}\sum_{\substack{m_1^k+m_2^2=n\\k\ge 3}}1.\]
The largest value of $k$ which we have to consider is at most $O(\log N)$.
So we have
\begin{align*}
\Sigma_2&\ll
N^{1/3}\sum_{k=3}^{O(\log N)}
\sum_{n\le N}\sum_{\substack{m_1^k+m_2^2=n}}1
=N^{1/3}\sum_{k=3}^{O(\log N)}\sum_{m_1^k+m_2^2\le N}1\\
&\ll N^{1/3}\sum_{k=3}^{O(\log N)}\sum_{m_1^3+m_2^2\le N}1
\ll N^{7/6}(\log N).
\end{align*}
These estimates gives
\[\sum_{n\le N}\left(\sum_{\substack{p^k+m^2=n\\k\ge 2}}(\log p)\right)^2
\ll N^{7/6}(\log N)\ll(N\log N)^{3/2}.\]
This justifies (\ref{modified_mean}).

Now assuming GRH, we know that
\[\mathfrak{S}(n)\gg(\log\log n)^{-2}\]
holds for $n\neq m^2$ \cite[p. 304]{Mikawa}.
Hence (\ref{modified_mean}) gives
\[N(\log\log N)^{-4}\sum_{\substack{N/2<n\le N\\n\neq m^2,p+m^2}}1
\ll\sum_{n\le N}
\left|\sum_{p+m^2=n}(\log p)-\mathfrak{S}(n)\sqrt{n}(1+O(n^{-\eta}))\right|^2
\ll (N\log N)^{3/2}\]
or
\[E(N)-E(N/2)\ll N^{1/2}(\log N)^{3/2}(\log\log N)^4.\]
Therefore we finally obtain that
\[E(N)\ll N^{1/2}+\sum_{k=1}^{O(\log N)}\left(E(N/2^{k-1})-E(N/2^k)\right)
\ll N^{1/2}(\log N)^{3/2}(\log\log N)^4.\]
The restriction of the variable that $N$ is a positive integer,
can be removed trivially.
\end{proof}
\subsection*{Acknowledgements}
The author would like to thank Professor Kohji Matsumoto
for his comments and successive encouragements.

\end{document}